\documentclass[10pt]{article}

\usepackage{hyperref}









\bibliographystyle{abbrv}

%
\usepackage{amsmath}
\usepackage{graphicx}
\usepackage{color}
\usepackage{forest}
\usepackage{amsfonts}
\usepackage{lscape}
\usepackage{amsthm}

\hyphenation{me-thods me-mo-ry com-pu-ters ma-ni-pu-la-te ca-pa-ci-ty ma-ni-pu-la-ting ne-ces-sa-ry pro-ces-sing in-crea-sing eche-lon pre-vious res-pec-ti-ve mea-su-ring}

\newtheorem{theo}{Theorem}
\title{Memory-Usage Advantageous\\ Block Recursive Matrix Inverse}
\author{Iria C. S. Cosme,\\
Isaac F. Fernandes,\\
Jo\~ao L. de Carvalho,\\
Samuel Xavier-de-Souza}
\date{}

\begin{document}


\maketitle

\begin{abstract}
The inversion of extremely high order matrices has been a challenging task because of the limited processing and memory capacity of conventional computers. In a scenario in which the data does not fit in memory, it is worth to consider exchanging less memory usage for more processing time in order to enable the computation of the inverse which otherwise would be prohibitive. We propose a new algorithm to compute the inverse of block partitioned matrices with a reduced memory footprint. The algorithm works recursively to invert one block of a $k \times k$ block matrix $M$, with $k \geq 2$, based on the successive splitting of $M$. It computes one block of the inverse at a time, in order to limit memory usage during the entire processing. Experimental results show that, despite increasing computational complexity, matrices that otherwise would exceed the memory-usage limit can be inverted using this technique.

\end{abstract}


\section{Introduction}

Matrix inversion is a computation task necessary in many scientific applications, such as signal processing, complex network analysis, statistics \cite{mccullagh1989generalized} and some eigenvalue-related problems~\cite{byers1987solving}, to name just a few. There are some commonly available matrix inversion algorithms for nonsingular matrices, like Gaussian elimination, Gauss-Jordan, LU Decomposition and Cholesky decomposition. The majority of these algorithms are computationally intensive in use of memory and processor. For example, computing the inverse of a $n \times n$ matrix with the Gauss-Jordan method has computational complexity of $O(n^{3})$ and memory storage complexity of $O(n^{2})$. This can forbid the applicability of such methods for large-scale matrices, mainly, because data may simply not fit in memory.

Working with big data is a situation becoming increasingly common in today's world due to advances in sensor and communication technologies and to the evolution of digital data. This has become a challenging task because the handled data exceed conventional sizes and are often collected at speed greater than the processing and memory capacity of conventional computers can handle. In classification (or regression) problems~\cite{an2007fast}, for example, inverting matrices of extremely high order can lead to exceeding the computer memory capacity just to store its inverse. Many algorithms were developed with the objective to speed up the processing of large matrix computations. In the sequel, we list a few of those efforts regarding block and recursive algorithms.

The use of block partitioned matrices is commonly used to cut down processing time of matrix computations. Block matrices may occur naturally due to the ordering of the equations and the variables in a wide variety of scientific and engineering applications, such as in the incompressible Navier-Stokes equations \cite{elman2008taxonomy}, mixed finite elements approximation of elliptic partial differential equations~\cite{brezzi1991mixed}, optimal control~\cite{betts2001practical}, electrical networks~\cite{bjorck1996numerical} and the Strassen's Algorithm~\cite{strassen1969gaussian} for fast matrix multiplication. Algorithms that manipulate matrices at the block level are often more efficient because they are more abundant in level-3 operations~\cite{golub2013matrix}, and thus, can be implemented recursively. 

There are many related papers on the inverse of block matrices. In~\cite{lu2002inverses}, the authors provide inverse formulae for $2 \times 2$ block matrices applied in block triangular matrices and various structured matrices such as Hamiltonian, per-Hermitian and centro-Hermitian matrices. The inversion of block circulant matrices has been extensively investigated in~\cite{baker1988recursive} and~\cite{vescovo1997inversion}. Also, in \cite{karimi2014block}, the authors suggest the use block preconditioner for the block partitioned matrices.

Likewise, recursive algorithms have been applied in~\cite{baker1988recursive},~\cite{madsen1983cyclic} and~\cite{tsitsas2007recursive} for the inversion of particular cases of matrices, such as circulant-structured matrices. In~\cite{madsen1983cyclic} a recursive method is proposed for the LU decomposition of a real symmetric circulant matrix. In~\cite{baker1988recursive}, a recursive algorithm is applied to calculate of the first block row of the inverse of a block circulant matrix with circulant blocks. And, finally, in~\cite{tsitsas2007recursive}, the authors propose a recursive algorithm based on the inversion of $k \times k$ block matrices for cases of matrices with circulant blocks based on the previous diagonalization of each circulant block. 

In addition, in \cite{tsitsas2010DTT}, the authors propose an efficient method for the inversion of matrices with $U$-diagonalizable blocks (being $U$ a fixed unitary matrix) by utilizing the $U$-diagonalization of each block and subsequently a similarity transformation procedure. This approach allows getting the inverse of matrices with $U$-diagonalizable blocks without having to assume the invertibility of the blocks involved in the procedure, provided certain conditions met.

Despite these numerous efforts, the inversion of large dense matrices is still challenging for large datasets. In a scenario in which the data does not fit in the memory, exchanging less memory usage for more processing time could be an alternative to enable the computation of the inverse which otherwise would be prohibitive.

In this paper, motivated by the preceding considerations, we introduce a recursive method for the inversion of a $k \times k$ block matrix $M \in \mathbb{R}^{m \times m}$ with square blocks of order $b$. The basic idea of this algorithm, called Block Recursive Inversion (BRI), lies in the determination of one block of the inverse of the matrix at a time. The method is based on the successive splitting of the original matrix into 4 square matrices of an inferior order, called frames. For this, it is considered two stages, namely the forward recursive procedure and backward recursive procedure. The forward recursive procedure terminates after $k-2$ steps when the resulting frames have $2 \times 2$ blocks. Thereafter, in the backward recursive procedure, for each 4 frames generated, operations are carried out reducing them to a single block. Differently from those proposed in~\cite{baker1988recursive,madsen1983cyclic,tsitsas2007recursive}, our recursive algorithm may be applied for inverting any $M \in \mathbb{R}^{m \times m}$, provided that $M$ is nonsingular and that all submatrices that need to be inverted in the recurrent procedure are also nonsingular.

The recursive algorithm proposed in \cite{tsitsas2007recursive} presents lower computational complexity than the BRI in inversion processes for the cases of matrices with circulant blocks. However, for the cases that the data do not fit in memory, using the BRI might be more adequate since it has lower memory storage complexity.

Besides requiring a much smaller memory footprint to work, having an algorithm that computes only parts of the inverse at a time may be useful for some applications. In \cite{an2007fast}, for example, only the diagonal blocks of the kernel matrix of the Least Squares Support Vector Machine (LS-SVM) are used to compute the predicted labels in the cross-validation algorithm. Thus, it is not strictly necessary to compute and, in fact, store in memory the entire inverse of the kernel matrix.

This paper is organized as follows. In Section \ref{sec:preliminares}, we introduce some notations and summarize the main definitions about the inversion of $2 \times 2$ block matrices using Schur Complement. In Section \ref{sec:recursive_algorithm}, we present the proposed recursive algorithm. In Section \ref{sec:inv4x4}, we demonstrate a representative example of the recursive inversion of $4 \times 4$ block matrices. So, we introduce details of the implementation in Section \ref{sec:implementation}. In Section \ref{sec:complexity}, we present the complexity analysis of the proposed algorithm, including an investigation of the cost of memory storage computational. In Section \ref{sec:results}, we describe the experiments as well as the results obtained. Finally, in Section \ref{sec:conclusion}, we present the conclusions and future works this research.

\section{Preliminaries} \label{sec:preliminares}

In this section, we begin with some basic notations which are frequently used in the sequel.

Let $M \in \mathbb{R}^{m \times m}$ be a nonsingular matrix composed of square blocks $M_{\alpha \beta}$ of the same order
\begin{equation} \label{eq:blockmatricesnxn}
M = \left[\begin{array}{ccc}
M_{11} & \cdots & M_{1k} \\
\vdots & \ddots & \vdots \\
M_{k1} & \cdots & M_{kk} 
\end{array}\right],
\end{equation}
where $M_{\alpha \beta}$ designates the $(\alpha , \beta)$ block. With this notation, block $M_{\alpha \beta}$ has dimension $b \times b$, with $b=\dfrac{m}{k}$, and $M = (M_{\alpha \beta})$ is a $k\times k$ block matrix.

Assume that $M^{-1}$ is the inverse matrix of $M$. So,
\begin{equation} \label{eq:inverseblockmatricesnxn}
M^{-1} = \left[\begin{array}{ccc}
N_{11} & \cdots & N_{1k} \\
\vdots & \ddots & \vdots \\
N_{k1} & \cdots & N_{kk} 
\end{array}\right]. 
\end{equation}

Assigning $k=2$ results in a $2 \times 2$ block matrix as 
\begin{equation} \label{eq:blockmatrices2x2}
M = \left[\begin{array}{cc}
A & B \\
C & D 
\end{array}\right],  
\end{equation}
where $A:=M_{11}$, $B:=M_{12}$, $C:=M_{21}$, and $D:=M_{22}$; and $M^{-1}$ is its inverse matrix as
\begin{equation} \label{eq:inv_blockmatrices2x2}
M^{-1} = \left[\begin{array}{cc}
E & F \\
G & H 
\end{array}\right],  
\end{equation}
where $E:=N_{11}$, $F:=N_{12}$, $G:=N_{21}$, and $H:=N_{22}$.

If $A$ is nonsingular, the Schur complement of $M$ with respect to $A$\cite{zhang2005schur}, denoted by $(M/A)$, is defined by 
\begin{equation} \label{eq:SchurofA}
(M/A) = D-CA^{-1}B.
\end{equation}

Let us remark that it is possible to define similarly the following Schur complements 
\begin{equation} \label{eq:SchurofB}
(M/B) = C-DB^{-1}A,
\end{equation}
\begin{equation} \label{eq:SchurofC}
(M/C) =B-AC^{-1}D,  \quad \mbox{and}
\end{equation}
\begin{equation} \label{eq:SchurofD}
(M/D) = A-BD^{-1}C,
\end{equation}
provided that the matrices $B$, $C$, and $D$ are nonsingular only in (\ref{eq:SchurofB}), (\ref{eq:SchurofC}), and (\ref{eq:SchurofD}), respectively \cite{brezinski1988other}.

Considering the $D$ block, if both $M$ and $D$ in (\ref{eq:blockmatrices2x2}) are nonsingular, then $(M/D) = A-BD^{-1}C$ is nonsingular, too, and $M$ can be decomposed as
\begin{equation} \label{eq:decomposedM}
M = \left[\begin{array}{cc}
I & BD^{-1} \\
0 & I
\end{array}\right]
\left[\begin{array}{cc}
(M/D) & 0 \\
0 & D
\end{array}\right]
\left[\begin{array}{cc}
I & 0 \\
D^{-1}C & I
\end{array}\right], 
\end{equation}
where $I$ is the identity matrix. In this case, the inverse of $M$ can be written as 
\begin{eqnarray} \label{eq:invSchur}
M^{-1} & = & \left[\begin{array}{cc} \nonumber
\scriptstyle I & \scriptstyle 0 \\
\scriptstyle -D^{-1}C & \scriptstyle I
\end{array}\right]
\left[\begin{array}{cc}
\scriptstyle (M/D)^{-1} & \scriptstyle 0 \\
\scriptstyle 0 & \scriptstyle D^{-1}
\end{array}\right]
\left[\begin{array}{cc}
\scriptstyle I & \scriptstyle -BD^{-1} \\
\scriptstyle 0 & \scriptstyle I
\end{array}\right] \\ 
& = & \left[\begin{array}{cc}
\scriptstyle (M/D)^{-1} & \scriptstyle -(M/D)^{-1}BD^{-1} \\ 
\scriptstyle  -D^{-1}C(M/D)^{-1} & \scriptstyle  D^{-1}+D^{-1}C(M/D)^{-1}BD^{-1}
\end{array}\right]. 
\end{eqnarray}

The formulation in (\ref{eq:invSchur}) is well known and has extensively been used in dealing with inverses of block matrices \cite{noble1988applied}.

\section{Block Recursive Inversion algorithm} \label{sec:recursive_algorithm}

It is worth noting that, according to (\ref{eq:invSchur}), to get $E$ just calculate the inverse of Schur complement of $D$ in $M$. 

Following this premise, it was observed that to obtain the block $N_{11}$ of the inverse of the $k \times k$ block matrix $M$, with $k>2$, one should successively split $M$ into $4$ submatrices of the same order, called frames, until the resulting frames are $2\times 2$ block matrices. Then, for each resulting $2 \times 2$ frame, the Schur complement should be applied in the opposite direction of the recursion, reducing the frame to a single block, which should be combined with the other 3 blocks of the recursion branch to form $2 \times 2$ frames that should be reduced to single blocks, successively. Finally, $N_{11}$ is obtained by calculating the inverse of the last block resulting from the reverse recursion process. For this, all submatrices which are required to be inverted in the proposed algorithm must be nonsingular.

For a better understanding, the algorithm will be explained considering two stages, namely the \textit{forward recursive procedure} and \textit{backward recursive procedure}. For this, consider (\ref{eq:blockmatricesnxn}) as the input block matrix.

\subsection{Forward recursive procedure} \label{subsec:Forward}

\textbf{Step 1:} Split the input, a $k \times k$ block matrix $M$, into four $(k - 1) \times (k-1)$ block matrices $M \rangle_A$, $M \rangle_B$, $M \rangle_C$, and $M \rangle_D$, called frames, excluding one of the block rows of $M$ and one of the block columns of $M$ as follows.

The frame $M \rangle_A$ results from the removal of the lowermost block row and rightmost block column of $M$; $M \rangle_B$ results from the removal of the lowermost block row and leftmost block column of $M$; $M \rangle_C$ results from the removal of the uppermost block row and rightmost block column of $M$ and, finally, $M \rangle_D$ results from the removal of the uppermost block row and leftmost block column of $M$, as shown in (\ref{eq:F_k-1}).

\begin{eqnarray} \label{eq:F_k-1}
M \rangle_{A}  = \left[\begin{array}{ccc}
M_{11} & \cdots & M_{1(k-1)} \\
\vdots & \ddots & \vdots \\
M_{(k-1)1} &  \cdots & M_{(k-1)(k-1)} 
\end{array}\right], \\ \nonumber
M \rangle_B = \left[\begin{array}{ccc}
M_{12} & \cdots & M_{1k} \\
\vdots & \ddots & \vdots \\
M_{(k-1)2} & \cdots & M_{(k-1)k} 
\end{array}\right], \\ \nonumber
M \rangle_{C} = \left[\begin{array}{ccc}
M_{21} & \cdots & M_{2(k-1)} \\
\vdots & \ddots & \vdots \\
M_{k1} & \cdots & M_{k(k-1)} 
\end{array}\right], \\  \nonumber
M \rangle_{D} = \left[\begin{array}{ccc}
M_{22} & \cdots & M_{2k} \\
\vdots & \ddots & \vdots \\
M_{k2} & \cdots & M_{kk} 
\end{array}\right].
\end{eqnarray} 
\\

\textbf{Step 2:} Exchange block rows and/or block columns until block $M_{22}$ reaches its original place, namely, second block row and second block column. In this case, in the Frame $M \rangle_A$, no exchange occurs. In $M \rangle_B$, it is necessary to exchange the two leftmost block columns. In $M \rangle_C$, it is necessary to exchange the two uppermost block rows. And, finally, in $M \rangle_D$, simply exchange the two leftmost block columns and the two uppermost block rows. So, applying these exchanges results in 
\begin{eqnarray} \label{eq:F_k-1_ChangeM22}
M \rangle_A & = & \left[\begin{array}{cccc} \nonumber
M_{11} & M_{12} & \cdots & M_{1(k-1)} \\
M_{21} & M_{22} & \cdots & M_{2(k-1)} \\
M_{31} & M_{32} & \cdots & M_{3(k-1)} \\
\vdots & \ddots & \vdots \\
M_{(k-1)1} & M_{(k-1)2} & \cdots & M_{(k-1)(k-1)} 
\end{array}\right], \\ 
M \rangle_B & = & \left[\begin{array}{cccc}
M_{13} & M_{12} & \cdots & M_{1k} \\
M_{23} & M_{22} & \cdots & M_{2k} \\
M_{33} & M_{32} & \cdots & M_{3k} \\
\vdots & \vdots & \ddots & \vdots \\
M_{(k-1)3} & M_{(k-1)2} & \cdots & M_{(k-1)k} 
\end{array}\right], \\ \nonumber
M \rangle_C & = & \left[\begin{array}{cccc}
M_{31} & M_{32} & \cdots & M_{3(k-1)} \\
M_{21} & M_{22} & \cdots & M_{2(k-1)} \\
M_{41} & M_{42} & \cdots & M_{4(k-1)} \\
\vdots & \vdots & \ddots & \vdots \\
M_{k1} & M_{k2} & \cdots & M_{k(k-1)} 
\end{array}\right], {\rm~and}  \\ \nonumber
M \rangle_D & = & \left[\begin{array}{cccc}
M_{33} & M_{32} & \cdots & M_{3k} \\
M_{23} & M_{22} & \cdots & M_{2k} \\
M_{43} & M_{42} & \cdots & M_{4k} \\
\vdots & \vdots & \ddots & \vdots \\
M_{k3} & M_{k2} & \cdots & M_{kk} 
\end{array}\right].
\end{eqnarray} 

Next, split each one of the frames in (\ref{eq:F_k-1_ChangeM22}), $M \rangle_A$, $M \rangle_B$, $M \rangle_C$, and $M \rangle_D$ into four $(k-2) \times (k-2)$ frames, excluding one of its block rows and one of its block columns, as instructed in Step 1. For example, splitting the frame $M \rangle_D$ results in
\begin{eqnarray} \label{eq:F_k-2}
M \rangle_D \rangle_A & = & \left[\begin{array}{cccc}  \nonumber
M_{33} & M_{32} & \cdots & M_{3(k-1)} \\
M_{23} & M_{22} & \cdots & M_{2(k-1)} \\
M_{43} & M_{42} & \cdots & M_{4(k-1)} \\
\vdots & \vdots & \ddots & \vdots \\
M_{(k-1)3} & M_{(k-1)2} & \cdots & M_{(k-1)(k-1)} 
\end{array}\right], \\ 
M \rangle_D \rangle_B & = & \left[\begin{array}{cccc}
M_{32} & M_{34} & \cdots & M_{3k} \\
M_{22} & M_{24} & \cdots & M_{2k} \\
M_{42} & M_{44} & \cdots & M_{4k} \\
\vdots & \vdots & \ddots & \vdots \\
M_{(k-1)4} & M_{(k-1)2} & \cdots & M_{(k-1)k}
\end{array}\right], \\ \nonumber
M \rangle_D \rangle_C & = & \left[\begin{array}{cccc}
M_{23} & M_{22} & \cdots & M_{2(k-1)} \\
M_{43} & M_{42} & \cdots & M_{4(k-1)} \\
M_{53} & M_{52} & \cdots & M_{5(k-1)} \\
\vdots & \vdots & \ddots & \vdots \\
M_{k3} & M_{k2} & \cdots & M_{k(k-1)}  
\end{array}\right], {\rm ~and} \\  \nonumber
M \rangle_D \rangle_D & = & \left[\begin{array}{cccc}
M_{22} & M_{24} & \cdots & M_{2k} \\
M_{42} & M_{44} & \cdots & M_{4k} \\
M_{52} & M_{54} & \cdots & M_{5k} \\
\vdots & \vdots & \ddots & \vdots \\
M_{k2} & M_{k4} & \cdots & M_{kk}  
\end{array}\right].
\end{eqnarray} 
\\

\textbf{Step $i$:} For each of the $(k-(i-1)) \times (k-(i-1))$ frames resulting from the previous step (Step $i-1$),  $M \rangle^{i-1}_{A}$, $M \rangle^{i-1}_{B}$, $M \rangle^{i-1}_{C}$ and $M \rangle^{i-1}_{D}$, make the permutation of block rows and/or blocks columns and then generate more four  $(k-i)\times(k-i)$ frames, $M \rangle^{i}_{A}$, $M \rangle^{i}_{B}$, $M \rangle^{i}_{C}$ and $M \rangle^{i}_{D}$,  excluding one of its block rows and one of its block columns in an analogous manner to what was done in Step 2 to the frames resulting from Step 1. Repeat Step $i$ until $i=k-2$.

The superscript number to the "$\rangle$" symbol indicates the amount of these existing symbols, including those not represented. Thus, it denotes $M \rangle_x^y$ with $y \in \mathbb{N}^*$ and $x \in \{A, B, C, D\}$.

\subsection{Backward recursive procedure} \label{subsec:Backward}

\textbf{Step 1:} For each of the resulting $2 \times 2$ frames from the Step $k-2$ in the forward recursive procedure, in Subsection \ref{subsec:Forward}, $M \rangle^{k-2}_A$, $M \rangle^{k-2}_{B}$, $M \rangle^{k-2}_{C}$, and $M \rangle^{k-2}_{D}$, compute the Schur complement of $M_{22}$ to generate the blocks: $\langle M \rangle^{k-2}_A$, $\langle M \rangle^{k-2}_{B}$, $\langle M \rangle^{k-2}_{C}$, and $\langle M \rangle^{k-2}_{D}$, using (\ref{eq:SchurofD}), (\ref{eq:SchurofC}), (\ref{eq:SchurofB}), and (\ref{eq:SchurofA}), in this order. The ''$\langle$'' symbol indicates that the Schur complement operation was done on the respective frame.  
\\

\textbf{Step 2:} Assemble $\langle M \rangle^{k-3}_A \rangle$, $\langle M \rangle^{k-3}_B \rangle$, $\langle M \rangle^{k-3}_C \rangle$, and $\langle M \rangle^{k-3}_D \rangle$, joining the four frames that were originated by each in the forward recursive procedure previously reported in Subsection \ref{subsec:Forward}, in the following way:
\begin{equation} \label{eq:BlockMatrix_Phi_A}
\langle M \rangle^{k-3}_A\rangle = \left[\begin{array}{cc}
\langle M \rangle^{k-3}_A\rangle_A  & \langle M \rangle^{k-3}_A\rangle_B \\
\langle M \rangle^{k-3}_A\rangle_C  & \langle M \rangle^{k-3}_A\rangle_D 
\end{array}\right],
\end{equation}

\begin{equation} \label{eq:BlockMatrix_Phi_B}
\langle M \rangle^{k-3}_B\rangle = \left[\begin{array}{cc}
\langle M \rangle^{k-3}_B\rangle_A  & \langle M \rangle^{k-3}_B\rangle_B \\
\langle M \rangle^{k-3}_B\rangle_C  & \langle M \rangle^{k-3}_B\rangle_D 
\end{array}\right],
\end{equation}

\begin{equation} \label{eq:BlockMatrix_Phi_C}
\langle M \rangle^{k-3}_C\rangle = \left[\begin{array}{cc}
\langle M \rangle^{k-3}_C\rangle_A  & \langle M \rangle^{k-3}_C\rangle_B \\
\langle M \rangle^{k-3}_C\rangle_C  & \langle M \rangle^{k-3}_C\rangle_D 
\end{array}\right],
\end{equation}

\begin{equation} \label{eq:BlockMatrix_Phi_D}
\langle M \rangle^{k-3}_D\rangle = \left[\begin{array}{cc}
\langle M \rangle^{k-3}_D\rangle_A  & \langle M \rangle^{k-3}_D\rangle_B \\
\langle M \rangle^{k-3}_D\rangle_C  & \langle M \rangle^{k-3}_D\rangle_D 
\end{array}\right]. 
\end{equation}
Then, for each of these $2 \times 2$ block matrices, calculate the Schur complement applying (\ref{eq:SchurofD}), (\ref{eq:SchurofC}), (\ref{eq:SchurofB}), and (\ref{eq:SchurofA}), in this order, generating the blocks: $\langle \langle M \rangle^{k-3}_A\rangle$, $\langle \langle M \rangle^{k-3}_B\rangle$, $\langle \langle M \rangle^{k-3}_C\rangle$, and $\langle \langle M \rangle^{k-3}_D\rangle$.
\\

\textbf{Step $i$:} Considering each four branches of the recursion, repeat the previous step generating frames $\langle^{i} M \rangle_{A}^{k-(i-1)} \rangle^{i-1}$, $\langle^{i} M \rangle_{B}^{k-(i-1)} \rangle^{i-1}$, $\langle^{i} M \rangle_{C}^{k-(i-1)} \rangle^{i-1}$, and $\langle^{i} M \rangle_{D}^{k-(i-1)} \rangle^{i-1}$, until $i=k-2$ and thus get $\langle^{k-2} M \rangle_A \rangle^{k-3}$, $\langle^{k-2} M \rangle_B \rangle^{k-3}$, $\langle^{k-2} M \rangle_C \rangle^{k-3}$, and $\langle^{k-2} M \rangle_D \rangle^{k-3}$.
\\

\textbf{Step $k-1$:} Assemble $\langle^{k-2}  M \rangle^{k-2}$ from $\langle^{k-2}  M \rangle_A \rangle^{k-3}$, $\langle^{k-2}  M \rangle_B \rangle^{k-3}$, $\langle^{k-2}  M \rangle_C \rangle^{k-3}$, and $\langle^{k-2}  M \rangle_D \rangle^{k-3}$, in the following way:

\begin{equation} \label{eq:BlockMatrix_Phi_M}
\langle^{k-2}  M \rangle^{k-2} = \left[\begin{array}{cc}
\langle^{k-2}  M \rangle_A \rangle^{k-3} & \langle^{k-2}  M \rangle_B \rangle^{k-3} \\
\langle^{k-2}  M \rangle_C \rangle^{k-3} & \langle^{k-2}  M \rangle_D \rangle^{k-3}
\end{array}\right].
\end{equation}
And, finally, calculate the Schur complement of $\langle^{k-2}  M \rangle^{k-2}$ with respect to $\langle^{k-2}  M \rangle_D \rangle^{k-3}$ using (\ref{eq:SchurofD}). Thus, the inverse of the matrix corresponding to $N_{11}$ corresponds to 
\begin{eqnarray} \label{eq:blockmatrix_NxN}
N_{11} & = & (\langle^{k-2}  M \rangle^{k-2} / \langle^{k-2}  M \rangle_D \rangle^{k-3})^{-1}.
\end{eqnarray} \\

It is worth noting that with a suitable permutation of rows and columns, we can position any block in the upper left-hand corner of $M$ and get the corresponding inverse to this block with respect to $M$.

Since the dimension $m$ and the number of blocks $k$ of $M$ are arbitrary, for the cases of the order $b$ of each block being $b=\frac{m}{k}$, with $b \notin \mathbb{N}$, it is possible to consider a matrix $\Gamma$ as an augmented matrix of $M$ as follows.

Let $m$ be the order of a square matrix $M \in \mathbb{R}^{m \times m}$, $k$ be the arbitrary number of blocks that we want to partition the input matrix of the BRI algorithm and $l$ the minimum integer with $\frac{m+l}{k} \in \mathbb{N}$. The augmented matrix of $M$ is
\begin{equation} \label{eq:M_augmented}
\Gamma = \left[\begin{array}{cc}
M & 0 \\
0^{T} & I 
\end{array}\right],  
\end{equation}
with $\Gamma = M$ for $\frac{m}{k} \in \mathbb{N}$, where $0$ is the $(m \times l)$ zero matrix and $I$ the identify matrix of order $l$. This way, the matrix $\Gamma$ may be partitioned in $k$ blocks and the inverse $M^{-1}$ of $M$ will be derived by applying the BRI algorithm to the matrix $\Gamma$.

Thus, we have that the inverse of $\Gamma$ is:
\begin{equation} \label{eq:inverse_M_augmented}
\Gamma^{-1} = \left[\begin{array}{cc}
M^{-1} & 0 \\
0^{T} & I 
\end{array}\right]. 
\end{equation}

\section{An example: inverse of $4 \times 4$ block matrices} \label{sec:inv4x4}

In order to clarify the operation of the proposed algorithm of the inversion of $k \times k$ block matrices, this section presents the process of inverting of a $4 \times 4$ block matrix.

The basic idea of the recursive algorithm, as shown in Section \ref{sec:recursive_algorithm}, for the inversion of $4 \times 4$ block matrices lies in the fact that in each step the involved matrices are split into four square matrices of the same order until getting to $2 \times 2$ block matrices.

Consider that a nonsingular $4b \times 4b$ matrix $M$ can be partitioned into $4 \times 4$ blocks of order $b$ as

\begin{equation} \label{eq:blockmatriz4x4}
M = \left[\begin{array}{cccc}
M_{11} & M_{12} & M_{13} & M_{14}\\
M_{21} & M_{22} & M_{23} & M_{24}\\
M_{31} & M_{32} & M_{33} & M_{34}\\
M_{41} & M_{42} & M_{43} & M_{44}\\
\end{array}\right]
\end{equation}
and $M^{-1}$ is its inverse matrix:

\begin{equation} \label{eq:invblockmatrix4x4}
M^{-1} = \left[\begin{array}{cccc}
N_{11} & N_{12} & N_{13} & N_{14}\\
N_{21} & N_{22} & N_{23} & N_{24}\\
N_{31} & N_{32} & N_{33} & N_{34}\\
N_{41} & N_{42} & N_{43} & N_{44}\\
\end{array}\right].
\end{equation}

Now, consider four square matrices of order $3b$ generated from $M$ by the process of excluding one of its block rows and one of its block columns, as shown in Step 1 of Subsection \ref{subsec:Forward}. After positioning the block $M_{22}$ in its original position in $M$ (Step 2), these frames are as follows:

\begin{eqnarray} \label{eq:frames4x4Matrix_InverterLouC}
M \rangle_A = \left[\begin{array}{ccc}
M_{11} & M_{12} & M_{13}\\
M_{21} & M_{22} & M_{23}\\
M_{31} & M_{32} & M_{33}
\end{array}\right], \quad
M \rangle_B  = \left[\begin{array}{ccc}
M_{13} & M_{12} & M_{14}\\
M_{23} & M_{22} & M_{24}\\
M_{33} & M_{32} & M_{34}
\end{array}\right], \\ \nonumber
M \rangle_C  = \left[\begin{array}{ccc}
M_{31} & M_{32} & M_{33}\\
M_{21} & M_{22} & M_{23}\\
M_{41} & M_{42} & M_{43}
\end{array}\right], {\rm ~and} \quad
M \rangle_D  = \left[\begin{array}{ccc}
M_{33} & M_{32} & M_{34}\\
M_{23} & M_{22} & M_{24}\\
M_{43} & M_{42} & M_{44}
\end{array}\right].
\end{eqnarray} 

Thus, recursively, each of the frames $M \rangle_A$, $M \rangle_B$, $M \rangle_C$ and $M \rangle_D$ will be divided into four $2 \times 2$ frames, applying the rules discussed in Subsection \ref{subsec:Forward} (Steps 2 and i). This process is illustrated in Figure \ref{fig:ForwardProcedure}.

Splitting, for example, the $M \rangle_B$  frame, the following frames are obtained:

\begin{eqnarray} \label{eq:frames2x2_of_FB3}
M \rangle_B \rangle_A = \left[\begin{array}{cc}
M_{13} & M_{12}\\
M_{23} & M_{22}
\end{array}\right], \quad
M \rangle_B \rangle_B = \left[\begin{array}{cc}
M_{12} & M_{14}\\
M_{22} & M_{24}
\end{array}\right], \\ \nonumber
M \rangle_B \rangle_C = \left[\begin{array}{cc}
M_{23} & M_{22}\\
M_{33} & M_{32}
\end{array}\right], {\rm ~and}\quad
M \rangle_B \rangle_D = \left[\begin{array}{cc}
M_{22} & M_{24}\\
M_{32} & M_{34}
\end{array}\right].
\end{eqnarray}

As the Step 1 of Subsection~\ref{subsec:Backward}, assuming $M_{22}$ as nonsingular and using (\ref{eq:SchurofD}), (\ref{eq:SchurofC}), (\ref{eq:SchurofB}) and (\ref{eq:SchurofA}), in that order, the Schur Complement of $M_{22}$ for each of the frames in (\ref{eq:frames2x2_of_FB3}) are calculated: 

\begin{equation} \label{eq:FA2/P22}
\langle M \rangle_B \rangle_A = (M \rangle_B \rangle_A/M_{22})= M_{13} - M_{12}M_{22}^{-1}M_{23};
\end{equation}
\begin{equation} \label{eq:FB2/P22}
\langle M \rangle_B \rangle_B = (M \rangle_B \rangle_B/M_{22})=M_{14} - M_{12}M_{22}^{-1}M_{24};
\end{equation}
\begin{equation} \label{eq:FC2/P22}
\langle M \rangle_B \rangle_C = (M \rangle_B \rangle_C/M_{22})=M_{33} - M_{32}M_{22}^{-1}M_{23};
\end{equation}
\begin{equation} \label{eq:FD2/P22}
\langle M \rangle_B \rangle_D = (M \rangle_B \rangle_D/M_{22})=M_{34} - M_{32}M_{22}^{-1}M_{24}.
\end{equation}

Running the Step 2 of the backward recursive procedure from the recursion algorithm presented in Subsection \ref{subsec:Backward}, produces:

\begin{equation} \label{eq:Phi_2_A}
\langle M \rangle_A \rangle = \left[\begin{array}{cc}
\langle M \rangle_A \rangle_A & \langle M \rangle_A \rangle_B \\
\langle M \rangle_A \rangle_C & \langle M \rangle_A \rangle_D \\
\end{array}\right],
\end{equation}

\begin{equation} \label{eq:Phi_2_B}
\langle M \rangle_B \rangle = \left[\begin{array}{cc}
\langle M \rangle_B \rangle_A & \langle M \rangle_B \rangle_B \\
\langle M \rangle_B \rangle_C & \langle M \rangle_B \rangle_D \\
\end{array}\right],
\end{equation}

\begin{equation} \label{eq:Phi_2_C}
\langle M \rangle_C \rangle = \left[\begin{array}{cc}
\langle M \rangle_C \rangle_A & \langle M \rangle_C \rangle_B \\
\langle M \rangle_C \rangle_C & \langle M \rangle_C \rangle_D \\
\end{array}\right], {\rm ~and}
\end{equation}

\begin{equation} \label{eq:Phi_2_D}
\langle M \rangle_D \rangle = \left[\begin{array}{cc}
\langle M \rangle_D \rangle_A & \langle M \rangle_D \rangle_B \\
\langle M \rangle_D \rangle_C & \langle M \rangle_D \rangle_D \\
\end{array}\right]
\end{equation}

Assuming $M_{22}$, $\langle M \rangle_A \rangle_D$, $\langle M \rangle_B \rangle_C$, $\langle M \rangle_C \rangle_B$, and $\langle M \rangle_D \rangle_A$ as nonsingular and using (\ref{eq:SchurofD}), (\ref{eq:SchurofC}), (\ref{eq:SchurofB}) and (\ref{eq:SchurofA}), in that order, the Schur Complement calculated for each of the frames are: 
\begin{eqnarray} \label{eq:Schur_Phi_2_A}
\langle \langle M \rangle_A \rangle & = & (\langle M \rangle_A \rangle / \langle M \rangle_A \rangle_D)\\
& = & \langle M \rangle_A \rangle_A - \langle M \rangle_A \rangle_B \langle M \rangle_A \rangle_D^{-1}\langle M \rangle_A \rangle_C   \nonumber
\end{eqnarray}
\begin{eqnarray} \label{eq:Schur_Phi_2_B}
\langle \langle M \rangle_B \rangle & = & (\langle M \rangle_B \rangle / \langle M \rangle_B \rangle_C) \\
& = & \langle M \rangle_B \rangle_B - \langle M \rangle_B \rangle_A\langle M \rangle_B \rangle_C^{-1}\langle M \rangle_B \rangle_D \nonumber
\end{eqnarray}
\begin{eqnarray} \label{eq:Schur_Phi_2_C}
\langle \langle M \rangle_C \rangle & = & (\langle M \rangle_C \rangle / \langle M \rangle_C \rangle_B) \\
& = & \langle M \rangle_C \rangle_C - \langle M \rangle_C \rangle_D\langle M \rangle_C \rangle_B^{-1}\langle M \rangle_C \rangle_A \nonumber
\end{eqnarray}
\begin{eqnarray} \label{eq:Schur_Phi_2_D}
\langle \langle M \rangle_A \rangle & = & (\langle M \rangle_D \rangle / \langle M \rangle_D \rangle_A) \\
& = & \langle M \rangle_D \rangle_D - \langle M \rangle_D \rangle_C\langle M \rangle_D \rangle_A^{-1}\langle M \rangle_D \rangle_B \nonumber
\end{eqnarray}

So, as the Step $k-1$ of Subsection \ref{subsec:Backward}, let $\langle \langle M \rangle \rangle$ be a block matrix generated by matrices resulting from (\ref{eq:Schur_Phi_2_A}), (\ref{eq:Schur_Phi_2_B}), (\ref{eq:Schur_Phi_2_C}), and (\ref{eq:Schur_Phi_2_D}), as shown in Figure \ref{fig:BackwardProcedure}.

\begin{equation} \label{eq:labelframes4x4}
\langle \langle M \rangle \rangle = \left[\begin{array}{cc}
\langle \langle M \rangle_A \rangle & \langle \langle M \rangle_B \rangle\\
\langle \langle M \rangle_C \rangle & \langle \langle M \rangle_D \rangle\\
\end{array}\right]
\end{equation}
Thus, $\langle \langle M \rangle \rangle$ is a $2 \times 2$ block matrix generated from the original $4 \times 4$ block matrix. So, in order to get $N_{11}$, simply get the inverse of Schur complement of $\langle \langle M \rangle_D \rangle$ in $\langle \langle M \rangle \rangle$, following the same formula used to get $E$, shown in the upper left-hand corner of  (\ref{eq:invSchur}), namely $(M/D)^{-1}=(A-BD^{-1}C)^{-1}$.

Applying this formula, we have 
\begin{eqnarray} \label{eq:blockmatrix4x4}
N_{11} & = & (\langle \langle M \rangle \rangle /\langle \langle M \rangle_D \rangle)^{-1}  \\
& = & (\langle \langle M \rangle_A \rangle - \langle \langle M \rangle_B \rangle \langle \langle M \rangle_D \rangle^{-1}\langle \langle M \rangle_C \rangle)^{-1}.\nonumber
\end{eqnarray}

\begin{figure}[!h]
\centering 
\includegraphics[width=\linewidth, height=35mm]{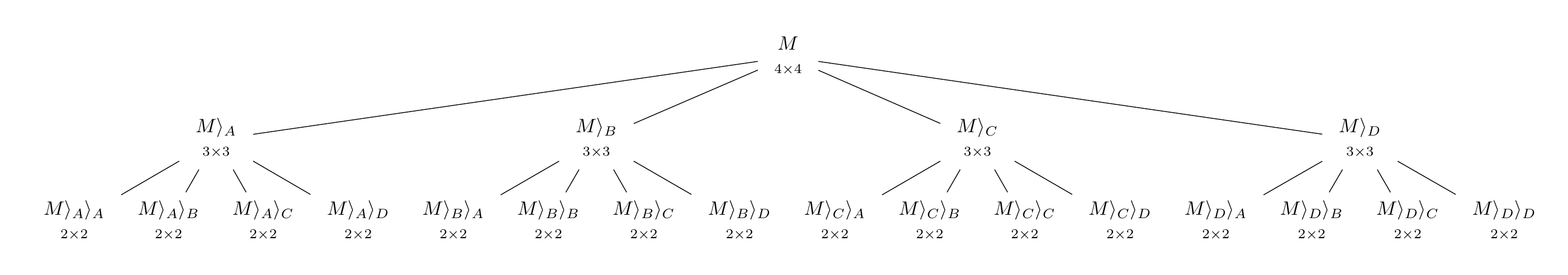}
\caption{Forward recursive procedure}
\label{fig:ForwardProcedure}
\end{figure} 

\begin{figure}[!h]
\centering 
\includegraphics[width=\linewidth, height=60mm]{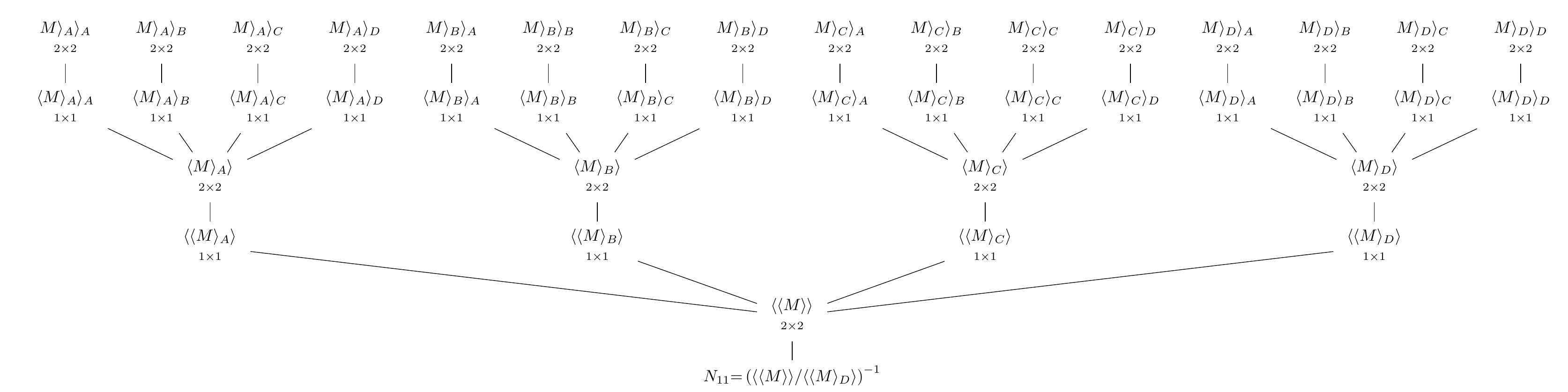}
\caption{Backward recursive procedure}
\label{fig:BackwardProcedure}
\end{figure}

\section{Implementation} \label{sec:implementation}

The Block Recursive Inversion algorithm proposed in Section \ref{sec:recursive_algorithm} has been implemented in C++ language, using the linear algebra library Armadillo \cite{sanderson2016armadillo} for the inversion of the $b \times b$ blocks in the backward procedure.

Due to the recursive structure of the algorithm, it is possible to store only a few $b \times b$ blocks in order to perform the whole computation. This feature allows the inversion of virtually any size of	 matrix to be computed using an amount of memory that scales only with the levels of recursion. In fact, if the backward recursion is computed sequentially, branch by branch, and if we assume that the inversion of a $b \times b$ matrix can be computed in place and consumes only twice the size of a $b \times b$ matrix, it can be shown that each branch needs only $\mathcal{O}(b^2)$ of memory storage in order to complete its computation (see Subsection~\ref{subsec:MemoryAnalysis}).

Additionally, for problems which the matrix to be inverted can be computed element by element, such as in the training of LS-SVMs \cite{an2007fast}, it is not needed to store the original matrix. It can be computed, element by element, from a function of the input data that, although need to be stored, it is often many orders of magnitude smaller. For this application, it might not even be necessary to compute all blocks of the inverse, since the cross-validation procedure only needs the blocks closer to the main diagonal to be performed.

However, since the BRI computes one only block of the inverse, namely the upper left block for computing the complete inverse, it is necessary to run the algorithm $k^2$ times with suitable permutation of rows and columns in order to compute all blocks of the inverse.

\section{Complexity analysis} \label{sec:complexity}

The complexity analysis of the BRI can be built based on the amount Schur complement operations that must be applied to the $2 \times 2$ block matrices in each step of the backward recursive procedure described in Subsection \ref{subsec:Backward}. The forward procedure does not contain arithmetic operations. Consider the following basic facts.

\begin{enumerate}
\item[(a)] The computational cost of the product $XY$ of two blocks $X$ and $Y$ of order $b$ is $\mathcal{O}(b^3)$;
\item[(b)] The computational cost of the inverse $X^{-1}$ of a block $X$ of order $b$ is $\mathcal{O}(b^3)$;
\item[(c)] The definition of Schur complements in (\ref{eq:SchurofD}), (\ref{eq:SchurofC}), (\ref{eq:SchurofB}) and (\ref{eq:SchurofA}) requires each 1 matrix inversion and 2 matrix multiplications for blocks of the order $b$. Hence, the computational cost of a Schur complement operation is $\mathcal{O}(3b^3)$.
\end{enumerate}

\begin{theo} \label{teo:C_BRI}
Let $M \in \mathbb{R}^{m \times m}$ be an invertible $k \times k$ block matrix with blocks of order $b$. Then, the complexity $C_{BRI}(k,b)$ of the BRI algorithm for the inversion of $M$ is
\begin{equation} \label{eq:C_BRI}
C_{BRI}(k,b) = \mathcal{O}(k^2b^34^k).
\end{equation}
\end{theo}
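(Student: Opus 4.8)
The plan is to isolate the only source of arithmetic in the algorithm and count it precisely. By the discussion preceding the theorem, the forward recursive procedure performs no arithmetic operations---it only splits and permutes blocks---so the entire cost comes from the Schur complements computed in the backward procedure, together with basic facts (a)--(c), which tell us that a single Schur complement on a $2\times 2$ arrangement of $b\times b$ blocks costs $\mathcal{O}(b^3)$. Thus the whole proof reduces to counting how many such Schur complements are performed.

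First I would count the frames produced by the forward procedure. Starting from the single $k\times k$ frame, each step replaces every current frame by four frames whose block dimension drops by one; hence after $j$ steps there are $4^j$ frames, each of block size $(k-j)\times(k-j)$. The procedure halts at block size $2\times 2$, i.e.\ after $j=k-2$ steps, leaving $4^{k-2}$ frames of $2\times 2$ blocks at the deepest level.

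Next I would count the Schur complements in the backward procedure. At its first step one Schur complement is applied to each of the $4^{k-2}$ deepest frames; the resulting single blocks are reassembled four at a time into $4^{k-3}$ new $2\times 2$ frames, and the process repeats. In general, step $j$ of the backward procedure performs $4^{k-1-j}$ Schur complements, down to a single Schur complement (plus one final block inversion) at step $k-1$. Summing the geometric series gives
\begin{equation*}
\sum_{i=0}^{k-2} 4^{i} = \frac{4^{k-1}-1}{3} = \mathcal{O}(4^k)
\end{equation*}
Schur complements to produce one block of the inverse. Multiplying by the $\mathcal{O}(b^3)$ cost of each Schur complement, one run of the BRI that yields a single block $N_{\alpha\beta}$ costs $\mathcal{O}(b^3 4^k)$, the lone extra inversion at the end being absorbed into this bound.

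Finally, as noted in Section~\ref{sec:implementation}, the BRI returns only one block of $M^{-1}$ per run, so the full inverse requires running the algorithm $k^2$ times with suitable row/column permutations. Multiplying the per-block cost by $k^2$ yields $C_{BRI}(k,b) = \mathcal{O}(k^2 b^3 4^k)$, as claimed. I expect the main obstacle to be the bookkeeping of the recursion: establishing rigorously that the number of frames grows exactly as powers of $4$ and that the backward Schur-complement counts assemble into precisely the geometric series above, rather than accidentally miscounting the branching factor or double-counting the permutation steps. Once that combinatorial count is pinned down, the result follows as an immediate product of the elementary bounds in (a)--(c).
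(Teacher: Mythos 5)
Your proposal is correct and follows essentially the same route as the paper: both count $4^{k-1-i}$ Schur complements at step $i$ of the backward procedure, each costing $\mathcal{O}(b^3)$ by facts (a)--(c), sum the geometric series to get $\mathcal{O}(b^3 4^k)$ per block (absorbing the final inversion), and multiply by $k^2$ runs to cover all blocks of $M^{-1}$. The only difference is presentational --- you make the frame-counting in the forward procedure explicit before inverting it, whereas the paper asserts the count directly.
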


\begin{proof}
According to the basic idea of the BRI algorithm, at step $i$ $(i=1,\dots,k-1)$ of the backward recursive procedure presented in Subsection \ref{subsec:Backward}, we have $(4^{k-1-i})$ $2 \times 2$ block matrices, with blocks of order $b$, to calculate the Schur complement by means of (\ref{eq:SchurofD}), (\ref{eq:SchurofC}), (\ref{eq:SchurofB}) and (\ref{eq:SchurofA}). Thus, by taking property (c) into account at step $i$ $(i=1,\dots,k-1)$ of the backward recursive procedure, we need $3b^34^{k-1-i}$ operations.
Furthermore, at the last step $k-1$, we have to invert the last $2 \times 2$ block matrix to get $N_{11}$, which is one block of $M^{-1}$. Thereby, we need more $k^2 - 1$ executions of the BRI algorithm to get the other blocks of $M^{-1}$. Therefore, the proposed algorithm has a complexity of 
\begin{equation}
 C_{BRI}(k,b) = (\sum\limits_{i=1}^{k-1}3b^34^{k-1-i}+b^3)k^2 = \mathcal{O}(k^2b^34^k).
\end{equation}
\end{proof}

\subsection{Memory cost analysis} \label{subsec:MemoryAnalysis}
According to the proposed implementation of BRI in Section \ref{sec:implementation}, it is possible to store only a few $b \times b$ blocks in order to perform the whole computation due to the recursive structure of the algorithm. 

\begin{theo} \label{teo:MC_BRI}
Let $M \in \mathbb{R}^{m \times m}$ be an invertible $k \times k$ block matrix with blocks of order $b$. Then, the memory cost $MC_{BRI}$ of the BRI algorithm for the inversion of $M$ is
\begin{equation} \label{eq:MC_BRI}
MC_{BRI} = \mathcal{O}(b^2).
\end{equation}
\end{theo}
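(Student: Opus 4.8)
The plan is to bound the peak number of $b \times b$ blocks that must reside in memory simultaneously, and then multiply that count by the $O(b^2)$ cost of storing a single such block. The forward recursive procedure of Subsection~\ref{subsec:Forward} contributes nothing to this count: as noted just before Theorem~\ref{teo:C_BRI}, it performs no arithmetic and merely records, through permutations of block rows and columns, which entries of $M$ (or of $\Gamma$) populate each frame. Hence no frame need be materialized during the forward pass; the required blocks can be fetched—or, as in the LS-SVM setting of Section~\ref{sec:implementation}, regenerated element by element—on demand only when the backward pass reaches a leaf. The entire memory budget is therefore governed by the backward recursive procedure of Subsection~\ref{subsec:Backward}.

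First I would analyze a single node of the backward recursion in isolation. Each node assembles a $2 \times 2$ frame from four $b \times b$ blocks, as in~(\ref{eq:labelframes4x4}), and reduces it to one $b \times b$ block via a Schur complement of the form $A - BD^{-1}C$, cf.~(\ref{eq:SchurofD}). Under the standing assumption of Section~\ref{sec:implementation} that the inverse $D^{-1}$ is formed in place using space $2b^2$, each of the two products and the final subtraction needs only an additional $O(b^2)$ of scratch, and the output is again a single $b \times b$ block. Thus a node consumes $O(b^2)$ working memory and returns an $O(b^2)$ result.

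The key step is to schedule the backward recursion depth-first—branch by branch—so that an entire level of frames is never resident at once. A breadth-first evaluation would require storing all $4^{k-2}$ leaf outputs of backward Step~1 at the same time, which is $O(4^k b^2)$ and defeats the purpose; the depth-first order is what avoids this. Visiting the four child sub-branches of a node one after another, I would compute a child's single-block result, fold it immediately into the pending Schur complement of the parent, and release all memory used inside that child's subtree before descending into the next child. In this way the live set along the active path consists only of a constant number of $b \times b$ blocks—the partial accumulator for the parent's Schur complement together with the scratch of the node currently being evaluated—while memory is reused across branches. Multiplying the bounded live-block count by the $O(b^2)$ per-block cost yields $MC_{BRI} = O(b^2)$.

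The main obstacle I anticipate is precisely this final bookkeeping argument: making rigorous that the depth-first schedule keeps the number of simultaneously live blocks bounded, and that the scratch of a completed subtree is genuinely reclaimable before the next sibling is begun. Once the evaluation is framed as a recursion returning a single $b \times b$ block per node with in-place reuse—so that no node ever needs more than its four inputs plus $O(b^2)$ of working storage, and siblings do not coexist in memory—the stated bound follows. The delicacy lies entirely in the ordering and reuse argument; no algebraic computation is involved.
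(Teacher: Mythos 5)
Your proposal matches the paper's proof in essence: both rely on the forward pass storing nothing, a branch-by-branch (depth-first) sequential evaluation of the backward recursion, and an $\mathcal{O}(b^2)$ scratch bound per Schur-complement node, concluding $MC_{BRI}=\mathcal{O}(b^2)$. The one point you yourself flag as delicate---that the live set along the active path is a \emph{constant} number of blocks, rather than up to three pending sibling results at each of the $k-2$ levels of the path, which would naively give $\mathcal{O}(kb^2)$---is glossed over in the paper's own proof as well, which simply writes $MC_{BRI}=3b^2+b^2$ without accounting for the partial results held across levels.
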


\begin{proof}
Suppose that it is not needed or that it is not possible to store $M$ in the main memory. Furthermore, each element of $M$ is accessed only to apply the Schur complement operations. So, let the needed memory storage for one block of $M$ of order $b$ be $b^2$ units.
Suppose that the backward recursive procedure defined in Subsection \ref{subsec:Backward} is computed sequentially, branch by branch. Thus, to calculate the Schur complement operation of the $2 \times 2$ block matrix with blocks of order $b$, involved in each branch, it is necessary only $3b^2$ units of memory to store the 2 operators involved and the respective result for each multiplication operation, inversion of block and subtraction operation. So we need $3(b)^2$ units to calculate the Schur complement operation at each branch per level $i$~$(i=1,\dots,k-1)$ of the backward recursion process. Furthermore, $b^2$ more units of memory are necessary to store a block in level $i$ while the Schur complement calculation on a branch of level $i-1$ finishes. Therefore, the proposed algorithm has a memory cost of  
\begin{equation}
 MC_{BRI} = 3(b)^2 + (b)^2 = \mathcal{O}(b^2).
\end{equation}
\end{proof}

It is worth to remark that of the complexity $C_{LU}(k,b)$ and memory cost $MC_{LU}$ of the LU decomposition and numerical inversion of the matrix $M$ is of order 
\begin{equation}
 C_{LU}(k,b) = \mathcal{O}(k^3b^3)
\end{equation}
and 
\begin{equation}
 MC_{LU}(k,b) = \mathcal{O}(k^2b^2).
\end{equation}
Consequently, the LU method is faster than BRI algorithm. On the other hand, the memory cost of the proposed algorithm is much lower than the LU inversion. This fact is verified by the numerical results of Section \ref{sec:results}.

\section{Experimental results} \label{sec:results}

For comparison purposes, we perform experiments with the BRI algorithm and the inverse generated by the calling the Armadillo function {\tt inv($\cdot$)} measuring execution time and memory usage. The {\tt inv($\cdot$)} computes an LU factorization of a general matrix using partial pivoting with row interchanges through integration with LAPACK (Linear Algebra PACKage)~\cite{anderson1999lapack}.

We consider square matrices of order $m$, composed of random entries, chosen from a normal distribution with mean 0.0 and standard derivation 1.0 generated by calling the Armadillo function {\tt randn()}. We performed experiments for matrices with different amounts of blocks ($k \times k$), for BRI, comparing them with unpartitioned matrices of the same size, for LU inversion.

The memory usage measurements were performed with the heap profiler of the Valgrind framework, called Massif \cite{nethercote2007valgrind}. 

The experiments were run on a computer with an AMD Athlon$^{TM}$ II X2 B28 Processor running Linux, with 4GB of RAM.

This section describes the experiments as well as the results obtained. In Subsection \ref{subsec:MemoryUsage}, the measurements of physical memory usage during the execution of BRI and LU inversion is presented. And, in Subsection \ref{subsec:ExecutionTime}, the results on execution time are analyzed.

\subsection{Memory usage} \label{subsec:MemoryUsage}

For both the BRI and the LU inversion, we computed $A^{-1}_{\gamma}$ used in method for cross-validation (CV) of LS-SVM in \cite{an2007fast}, being 

\begin{equation} \label{eq:A}
{A_{\gamma}}_{m \times m}  = \left[\begin{array}{cc}
0 & \textbf{1}^{T}_{n}\\
\textbf{1}_{n} & K_\gamma \\
\end{array}\right]
\end{equation}
with $\textbf{1}^{T}_{n}=[1,1,\dots,1]^T$, $K_\gamma = K + \frac{1}{\gamma} I_n$ and $K_{i,j} = K(x_i,x_j)=\exp (-\frac{||x_i-x_j||^{2}}{2\sigma^{2}})$.

The function $K(\cdot,\cdot)$ is the Radial Basis Function kernel (or Gaussian kernel) and $\{x_i\}_{i=1}^{n}$ is a set of input data. For the tests, this set was composed of random entries, chosen from a normal distribution with mean 0.0 and standard derivation 1.0, generated by the Armadillo command {\tt randn()}.

The memory usage for inversions with respect to the quantity of blocks ($k \times k$), for BRI, and with respect to the unpartitioned matrices, for LU inversion, is shown in Fig. \ref{fig:memory}.
\begin{figure}[htbp]
\centering 
\includegraphics[width=0.90 \textwidth]{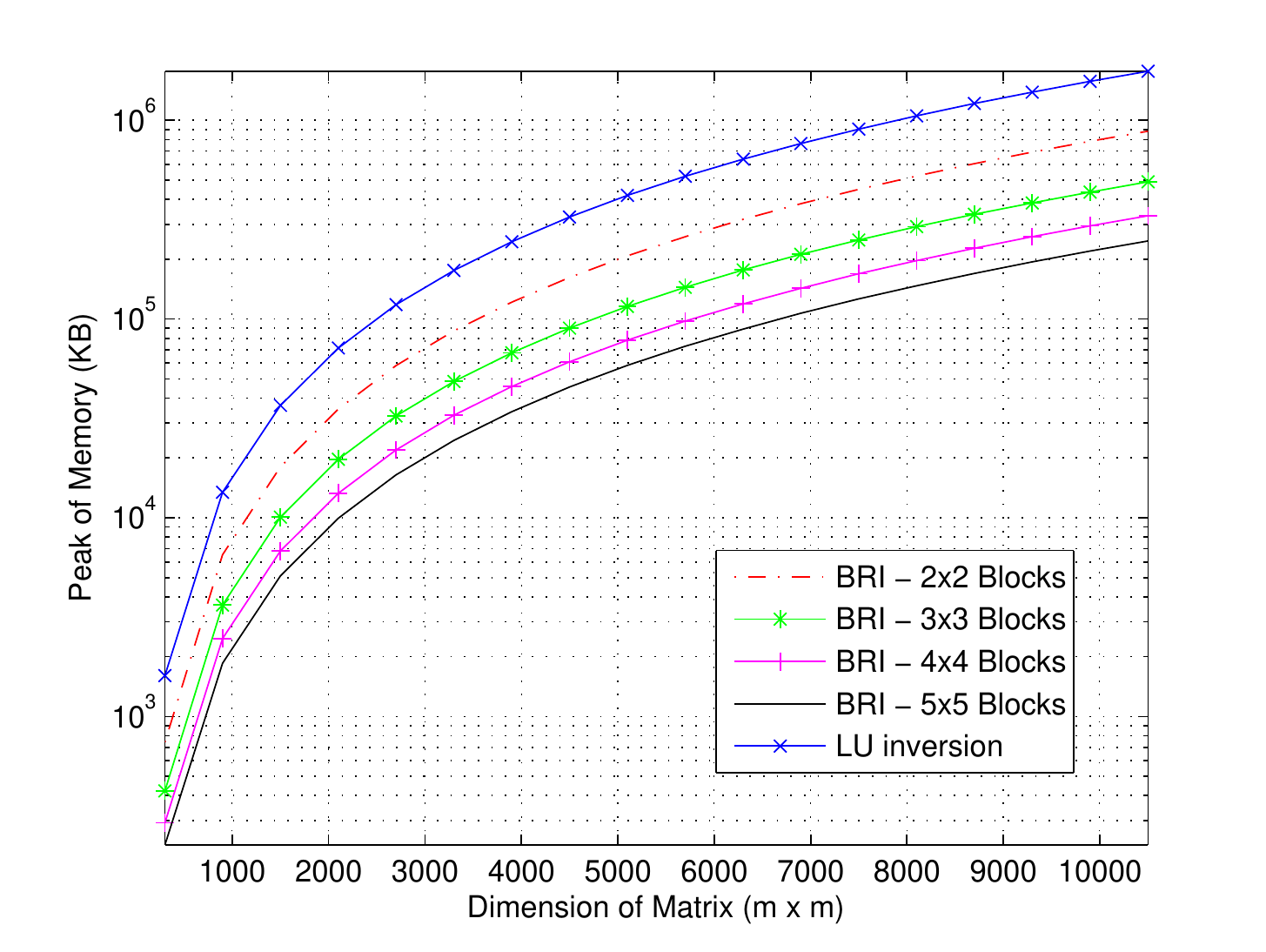}
\caption{Memory usage for the computation of the inverse of $m \times m$ matrices using the LU decomposition and the proposed BRI with different numbers of blocks.}
\label{fig:memory}
\end{figure}

The BRI clearly consumes less physical memory than the LU inversion. Considering an input matrix of the same dimension $m \times m$, as the number of block matrices increases, $k \times k$, the memory usage decreases. This occurs because the dimension $b \times b$ of the blocks decreases, where $b=\frac{m}{k}$, with respect to (\ref{eq:blockmatricesnxn}). In this way, less data is kept in memory during the entire inversion process. Hence, the application of the recursive inversion allows us to consider matrices $A_\gamma$ with much larger orders $m$, than those, which the LU inversion permits.

\subsection{Execution time} \label{subsec:ExecutionTime}

The Fig. \ref{fig:runtime} shows the processing time for the inversion of $m \times m$ matrices for a different number of blocks ($k \times k$), using BRI, and for unpartitioned matrices, using LU. The plots show that the execution time for the BRI is larger than the LU inversion, increasing as we increase the number of blocks that $A_\gamma$ is partitioned. 

\begin{figure}[htbp]
\centering 
\includegraphics[width=0.90 \textwidth]{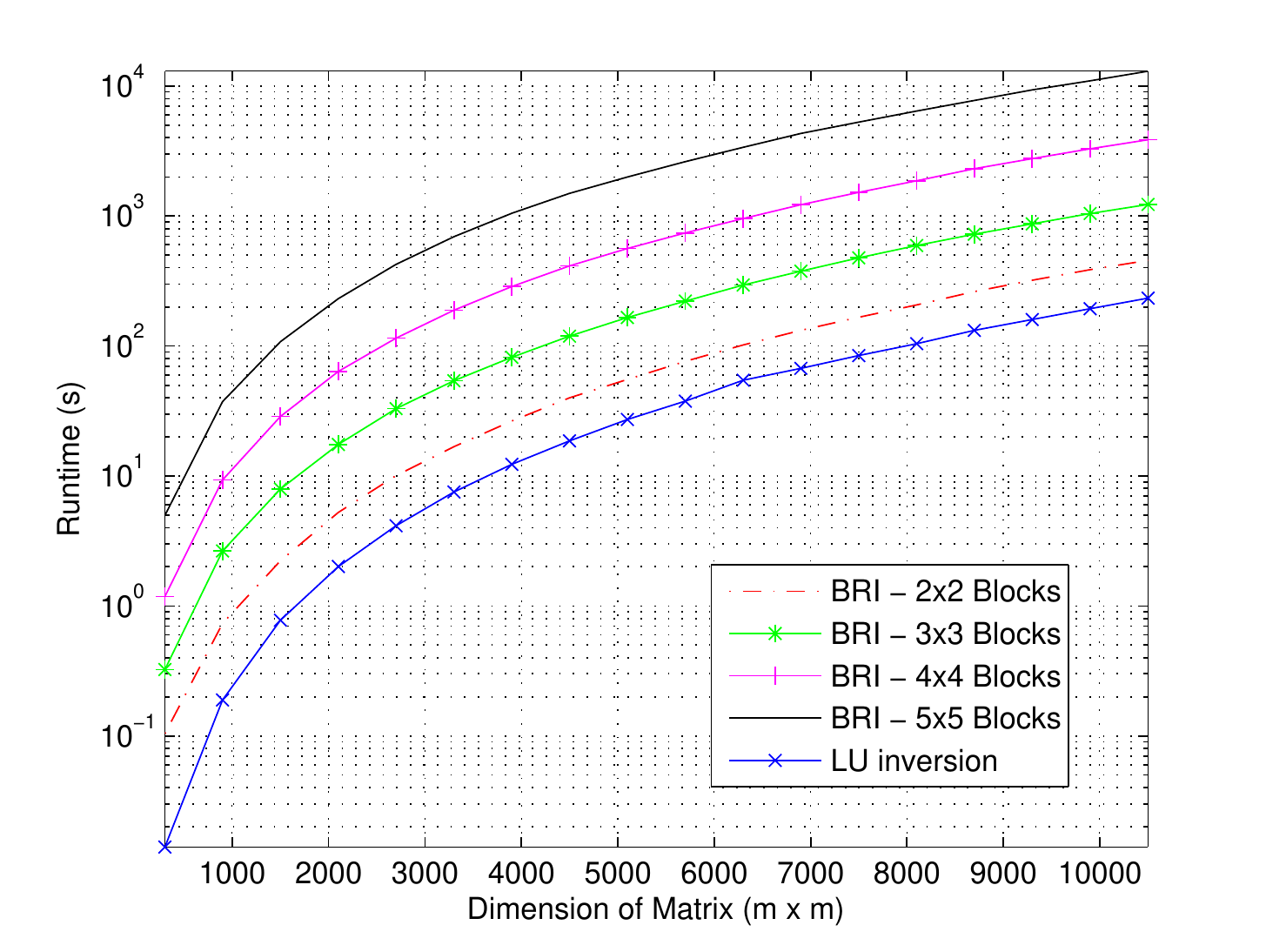}
\caption{CPU time for the computation of the inverse of $m \times m$ matrices using the LU decomposition and the proposed BRI with different numbers of blocks.}
\label{fig:runtime}
\end{figure}

Comparing Figs. \ref{fig:memory} and \ref{fig:runtime}, observe that the use of BRI to calculate the inverse of a $m \times m$ matrix yields a trade-off between memory usage and processing time when varying the number of blocks. This trade-off can be used to allow the inversion of matrices that otherwise would not fit in memory. Moreover, in order to avoid the use of slower memory, the value of $k$ could be chosen such that lower levels of the memory hierarchy would be avoided.

\section{Conclusion} \label{sec:conclusion}

This paper presents a novel recursive algorithm for the inversion of $m \times m$ matrices partitioned in $k-by-k$ blocks, with $k \geq 2$. The Block Recursive Inversion (BRI) algorithm obtains one block of the inverse at a time, resulting from the recursive splitting of the original matrix in $4$ square matrices of an inferior order. The proposed algorithm allows for a reduction of the memory usage which in turns allows inversions of high order matrices which otherwise would exceed the computer memory capacity. 

The experimental results showed that the proposed algorithm consumes much less memory than the LU inversion. This memory usage decrease as the number of block matrices increase, for an input matrix with constant dimension. Increasing the number of blocks also results in an increase of processing time, which characterizes a trade-off between memory usage and processing time. However, since the BRI computes one block of the inverse at a time, the larger processing time could be tackled by using parallel processing to compute all blocks at the same time in an embarrassingly parallel fashion.

It is worth mentioning that the BRI is even more useful in cases that it is only necessary to use parts of the inverse matrix, such as the computing of predicted labels of the cross-validation algorithms in LS-SVMs~\cite{an2007fast}.

Potentially, it might be possible to reduce the computational complexity of the proposed algorithm by reusing some of the computation across branches in the recursion procedure. Parallel implementation and the analysis of the effects of this approach on performance gains due to a possibly better use of the memory hierarchy are also good candidates for future work on this topic. 

Other further extensions of this article are the mathematical proof of this technique, a reformulation of the algorithm with respect to the Schur complement given by (\ref{eq:SchurofA}), the study of its numerical stability,  and possible applications in engineering and computational intelligence. Research on the parallelization of BRI algorithm and its application in cross-validation algorithms for LS-SVM are already in processes.

\section*{Acknowledgements}

This research did not receive any specific grant from funding agencies in the public, commercial, or not-for-profit sectors.

\section*{References}

\end{document}